\documentclass[10pt, a4paper, reqno, english]{amsart}
\usepackage[utf8]{inputenc}
\usepackage{amsmath,amsthm,amssymb}
\usepackage{tikz}
\usepackage{hyperref}

\def\SW{\mathrm{SW}}
\newcommand\ddd{\,\mathrm{d}}

\theoremstyle{plain}
\newtheorem{prop}{Proposition}
\newtheorem{theorem}[prop]{Theorem}
\theoremstyle{remark}
\newtheorem*{remark*}{Remark}

\newcommand{\Mod}[1]{\ (\mathrm{mod}\ #1)}

\title[The inverse problem for the Steiner--Wiener index]{The inverse problem for the Steiner--Wiener index via additive number theory}

\author{Christian Bernert}
\address{Institute for Science and Technology Austria\\
Am Campus 1\\
3400 Klosterneuburg\\
Austria}
\email{christian.bernert@ist.ac.at}

\author{Joshua Shaw}
\address{Preston\\United Kingdom}
\email{shawjoshua265@gmail.com}

\date{\today}
\subjclass[2020]{05C12 (11P05, 11P55)}

\begin{document}

\begin{abstract}
We show that, for any given $k \ge 2$,  every sufficiently large number appears as the Steiner--Wiener $k$ index of a graph.
    
\end{abstract}

\maketitle

\section{Introduction}

Recall that for a connected graph $G$ and a subset $S \subset V(G)$ of its vertices, the \emph{Steiner distance} $d(S)$ is defined as the smallest number of edges in a subgraph of $G$ whose vertex set contains $S$.
Note that for $|S|=2$, the Steiner distance $d(S)$ is just the ordinary distance in $G$ between the two vertices in $S$.

For a positive integer $k \ge 2$, the \emph{Steiner--Wiener $k$ index} of $G$ was defined in \cite{lmg16} (but essentially already discovered earlier in \cite{dankelmann96}) as
\[\SW_k(G)=\sum_{S \subset V(G), |S|=k} d(S).\]
Note that $\SW_2(G)$ is more classically known as the \emph{Wiener index} of $G$.

Motivated by questions in chemical graph theory, a lot of research has been devoted to studying $\SW_k(G)$ for special classes of graphs, see \cite{maofurtula21} for a survey.

The \emph{inverse problem} asks for the possible values of $\SW_k(G)$ for a fixed $k$, where $G$ ranges through all graphs.
In \cite{gyc94}, it was shown that for $k=2$, all positive integers except for $2$ and $5$ appear as the Wiener index of a graph.
In \cite{zgwj19}, this was extended to the cases $k \in \{3,4,5\}$, where it was shown that all but finitely many values appear as $\SW_k(G)$ for some $G$. In fact, for $k=3$ by computer search an explicit list of the $34$ exceptions was given.

Our main result resolves the inverse problem for all $k$.

\begin{theorem}\label{thm:main}
    Let $k \ge 2$ be a positive integer. Then for all but finitely many positive integers $n$, there is a graph $G$ with $\SW_k(G)=n$.
\end{theorem}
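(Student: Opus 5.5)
We plan to realise $\SW_k$ on a family of trees whose Steiner--Wiener index is an explicit polynomial in a few free integer parameters, and then use additive number theory (sums of values of a fixed polynomial, as in Waring's problem) to hit every large $n$. For a tree $T$, the Steiner distance of $S$ equals the number of edges $e$ such that both components of $T-e$ meet $S$. Hence
\[\SW_k(T)=\sum_{e\in E(T)}\left[\binom{N}{k}-\binom{a_e}{k}-\binom{N-a_e}{k}\right],\]
where $N=|V(T)|$ and $a_e,N-a_e$ are the sizes of the two components of $T-e$. This is the basic computational tool.

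The construction I would use is a ``broom/caterpillar'' type tree with a central vertex and attached pendant paths or stars. The shape of the tree should make $\SW_k$ a fixed large main term plus a sum $\sum_i f(x_i)$, where the $x_i$ are free lengths and $f$ is a polynomial of degree about $k$ with integer values. The main difficulty is that $N$ itself depends on the parameters. To deal with it, I would fix the total number of vertices $N$ and vary only how the vertices are distributed.

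A cleaner route is to take a star $K_{1,N-1}$ and perform local modifications that each change $\SW_k$ by a controlled amount. For instance, moving a leaf to hang off another leaf, or moving a leaf to the end of a path of length $x$, changes the index by a quantity $g(x)$ that is polynomial in $x$ and $N$. For the star, $\SW_k=(k-1)\binom{N-1}{k}+k\binom{N-1}{k-1}$. Moving leaves to form disjoint paths hanging from the center, of lengths $x_1,\dots,x_m$, gives
\[\SW_k=P_N+\sum_i h_N(x_i).\]
Here $h_N(x)$ counts the extra edges needed, namely for each edge on a pendant path, the $k$-sets that separate it. In this way $\SW_k$ becomes a main term plus a sum of polynomial values.

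Given $n$, choose $N$ so that the star value is just below $n$, with a gap of size $O(N^{k-1})$. The number of available leaves is about $N$, and the increments $h_N(x)$ for small $x$ behave like $N^{k-1}\cdot(\text{polynomial in } x)$ plus lower-order terms. Representing the gap then needs flexibility at every scale. The step $h_N(2)\approx \binom{N}{k-1}$ is too coarse on its own. I would therefore add a second kind of local modification whose increments are small, of size about $\binom{x}{k}$ for a small pendant subtree of size $x$ far from the center. Sums of about $s$ values of such degree-$k$ polynomials cover every residue and every integer in a long interval, by a Hilbert--Waring / circle-method result for integer-valued polynomials with $\gcd$ conditions. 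The fact that $\binom{x}{k}$ takes the value $1$ at $x=k$ removes any congruence obstruction.

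The main obstacle is making these contributions genuinely additive and independent. Each modification also shifts the component sizes seen by other edges, which produces cross terms. I would arrange all the gadgets to hang from a single high-degree center, so that the cross terms depend only on total gadget sizes. I would then absorb those totals by fixing the number of vertices used, padding with plain leaves. The target equation becomes $n-C(N)=\sum_i F(x_i)$ with $\sum x_i$ fixed. The hard part is then the Waring-type lemma with a linear side condition: every sufficiently large integer in the right range is such a sum with the number of terms bounded in terms of $k$. I would prove this with the circle method, or more elementarily by greedy approximation with large $x_i$ followed by a bounded number of small corrections covering all residues.
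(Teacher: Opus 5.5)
There is a genuine gap: you work entirely inside the class of trees, and there the statement is false for $k=3$. In a tree $d(\{u,v,w\})=\frac12\bigl(d(u,v)+d(v,w)+d(u,w)\bigr)$, hence $\SW_3(T)=\frac{N-2}{2}W(T)$ with $W(T)\ge (N-1)^2$. Suppose $\SW_3(T)=p$ is prime. Then $(N-2)W(T)=2p$, which forces either $N\le 4$ (so $p\le 10$) or $W(T)\le 2$ (impossible). So no prime $p\ge 11$ is the Steiner--Wiener $3$ index of a tree, and no tree-based argument can prove the theorem.

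Your concrete mechanism also fails for every $k$. For fixed $N$, your edge formula gives exactly $\SW_k(T)-\SW_k(S_N)=\sum_e g(a_e)$, where $a_e\le N/2$ is the smaller side and $g(a)=\binom{N-1}{k}-\binom{a}{k}-\binom{N-a}{k}$. So there are no cross terms at all. However, $g(1)=0$ and $g(a+1)-g(a)=\binom{N-a-1}{k-1}-\binom{a}{k-1}\ge 0$ for $a\le (N-1)/2$. Hence every non-leaf edge costs at least $g(2)=\binom{N-2}{k-1}-\binom{2}{k}$. Your ``small gadget of size $\binom{x}{k}$'' does not exist: $\binom{x}{k}$ only appears as a lower-order correction inside $g(x)\approx (x-1)\binom{N}{k-1}$. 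The window between $\SW_k(S_N)$ and $\SW_k(S_{N+1})$ has length about $k\binom{N}{k-1}$. So trees of order $N$ realise only $O_k(1)$ of its $\asymp N^{k-1}$ integers. (Minor: your star formula has its coefficients swapped; it should be $(k-1)\binom{N-1}{k-1}+k\binom{N-1}{k}$.)

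The missing idea is to leave trees and move \emph{down} from the star by adding edges. In the paper's nested star, vertex $n-1$ is universal and each chosen vertex $a_i$ is joined to all smaller labels. Then every $k$-set has $d(S)\in\{k-1,k\}$. A $k$-set avoiding $n-1$ has $d(S)=k-1$ exactly when $G[S]$ is connected, i.e.\ when $\max S\in\{a_1,\dots,a_r\}$. This gives $\SW_k(G)=\SW_k(S_n)-\sum_i\binom{a_i}{k-1}$, with no interaction between the $a_i$. The corrections have degree $k-1$, not $k$, which matches the $\asymp n^{k-1}$ gaps between consecutive star values. The number theory needed is then a Waring-type statement for sums of $s$ \emph{distinct} values $\binom{x}{k-1}$ with $x<n-1$ covering all of $[m_0,n^{k-1}]$. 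The paper proves this by the circle method, handling the local factors via a solution modulo $p^{1+v_p((k-1)!)}$ and Hensel lifting.
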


Note that our method does not give particularly good bounds on the size of the exceptional set, it seems that different techniques are required to improve on this.

We also remark that from the point of view of applications in chemistry, it is more natural to restrict to special classes of graphs, such as trees. The inverse problem then becomes much harder. It was resolved for $k=2$ independently in \cite{wagner06} and \cite{wangyu06}  but remains open for any $k \ge 3$.

We will deduce Theorem~\ref{thm:main} from the following two results:

\begin{prop} \label{prop:graph}
    For every sequence $a_1<\dots<a_r < n-1$ there is a graph $G$ with
    \[\SW_k(G)=\SW_k(S_n)-\binom{a_1}{k-1}-\dots-\binom{a_r}{k-1}.\]
\end{prop}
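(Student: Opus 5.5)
Throughout, $S_n$ is the star with centre $c$ and leaves $L$, $|L|=n-1$. The plan is to keep the centre joined to every leaf and add edges only among the leaves. Any graph $G$ built this way is connected, and $\SW_k(G)$ can be read off from the leaf graph $H=G[L]$.

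\textbf{Reduction to counting connected sets.} Fix a $k$-set $S$. Any connected subgraph containing $S$ has at least $k$ vertices, so $d(S)\ge k-1$. If $c\in S$, the star edges give $d(S)=k-1$, exactly as in $S_n$. If $c\notin S$, then $S\cup\{c\}$ with the star edges gives $d(S)\le k$. Equality $d(S)=k-1$ holds exactly when some tree with $k-1$ edges, hence with vertex set exactly $S$, contains $S$. That happens precisely when $S$ induces a connected subgraph of $G$, equivalently of $H$, since $c\notin S$. In $S_n$ every centre-free $k$-set has $d(S)=k$. Therefore
\[\SW_k(G)=\SW_k(S_n)-\#\{S\subset L:\ |S|=k,\ H[S]\text{ connected}\}.\]
It remains to build a graph $H$ on at most $n-1$ vertices whose number of connected induced $k$-subsets is $\sum_i\binom{a_i}{k-1}$.

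\textbf{Construction.} Since the $a_i$ are strictly increasing nonnegative integers, $a_i\ge i-1$, and $b_i:=a_i-(i-1)$ is a nondecreasing sequence of nonnegative integers. Take distinct leaves $y_1,\dots,y_r$ and $w_1,\dots,w_{b_r}$. This uses $a_r+1\le n-1$ leaves; any remaining leaves stay isolated in $H$. Let $W_i=\{w_1,\dots,w_{b_i}\}$, so that $W_1\subseteq W_2\subseteq\dots\subseteq W_r$. Join $y_i$ to every vertex of $W_i$ and to every $y_j$ with $j<i$. The $w$'s are pairwise non-adjacent. Then $N(y_i)\cap(W\cup\{y_1,\dots,y_{i-1}\})=W_i\cup\{y_1,\dots,y_{i-1}\}$, a set of size exactly $a_i$.

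\textbf{Counting.} Let $S$ be a connected $k$-set in $H$. Since $k\ge2$ and the $w$'s together with the extra leaves form an independent set, $S$ contains some $y$ and no isolated leaf. Let $i$ be the largest index with $y_i\in S$. Every $y_j\in S$ with $j<i$ is adjacent to $y_i$. Every $w\in S$ must be adjacent to some $y_j\in S$ with $j\le i$, so $w\in W_j\subseteq W_i$, and hence $w$ is adjacent to $y_i$. Thus $S\setminus\{y_i\}\subseteq N(y_i)\cap\{y_1,\dots,y_{i-1},w\text{'s}\}$.

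Conversely, every $(k-1)$-subset of that neighbourhood, together with $y_i$, induces a connected subgraph, since it contains a spanning star centred at $y_i$. Hence the connected $k$-sets with top index $i$ number exactly $\binom{a_i}{k-1}$. Summing over $i$ gives the claim.

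\textbf{Main obstacle.} The delicate point is preventing double counting, that is, ensuring each connected set is attributed to exactly one $y_i$ with exactly $\binom{a_i}{k-1}$ choices. This is handled by the nested neighbourhoods $W_i$ together with the clique on the $y$'s. The nesting is possible precisely because strict monotonicity of the $a_i$ makes $b_i$ nondecreasing, and $a_r<n-1$ is exactly the condition that the $a_r+1$ required leaves fit.
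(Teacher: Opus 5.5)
Your proof is correct and is essentially the paper's argument. Your graph is exactly the paper's nested star up to relabelling: $y_i$ plays the role of vertex $a_i$, $W_i$ is the set of non-hub vertices below $a_i$, and the isolated leaves are the vertices strictly between $a_r$ and $n-1$. Your attribution of each connected centre-free $k$-set to its top index $y_i$ is the paper's classification by $\max(S)\in\{a_1,\dots,a_r\}$, with the nesting made explicit via $b_i=a_i-(i-1)$ instead of coming for free from the integer labelling.
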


Here $S_n$ is the star graph on $n$ vertices and $n-1$ edges. Note that $\SW_k(S_n)=k \cdot \binom{n}{k}-\binom{n-1}{k-1}=(n-1)\binom{n-1}{k-1}$.

Star graphs were already used in \cite{zgwj19}. Our new observation is that one can construct \emph{nested} stars to allow  for more flexibility in the resulting values of $\SW_k$.

This naturally shifts our attention to the number-theoretic question of which values can be obtained as such a sum of binomial coefficients.

\begin{prop} \label{prop:ant}
    For every positive integer $d$ there are positive integers $s=s(d)$ and $m_0=m_0(d)$ such that for all positive integers $n$ and $m$ with
    \[m_0 \le m \le n^d\]
    there are distinct positive integers $x_1<\dots <x_s<n-1$ with
    \[\binom{x_1}{d}+\dots+\binom{x_s}{d}=m.\]
\end{prop}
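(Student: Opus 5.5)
We prove Proposition~\ref{prop:ant} by a greedy reduction down to a range where a Waring-type result for binomial coefficients applies. The binomial coefficient $\binom{x}{d}$ is an integer-valued polynomial of degree $d$ in $x$, and its values have no local obstructions: the gcd of the values at consecutive integers is $1$, since $\binom{x+1}{d}-\binom{x}{d}=\binom{x}{d-1}$, and iterating this reaches $\binom{x}{0}=1$. So the polynomial version of the Hilbert--Waring theorem applies; for instance Kamke's theorem, or the Hardy--Littlewood circle method for integer-valued polynomials, as the subject classification 11P55 suggests. It gives an integer $s_0=s_0(d)$ and a threshold $M_0$ such that every $M\ge M_0$ is a sum of $s_0$ values $\binom{y_i}{d}$. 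We need more than this: the $y_i$ must be \emph{distinct} and lie in a prescribed short range $y_i\le C M^{1/d}$. The circle-method version gives exactly this. For $s_0$ large it produces an asymptotic count $\gg M^{s_0/d-1}$ of representations with all $y_i\in[\tfrac12 (d!M/s_0)^{1/d}, (d!M)^{1/d}]$ positive. Representations with two equal coordinates are counted by the same method with one fewer variable, giving $O(M^{(s_0-1)/d-1+\varepsilon})$ of them, which is negligible. This local-to-global step with distinctness is the main obstacle. If the circle method is too heavy, the fallback is to use Kamke's theorem with positive arguments, and then fix collisions separately using the identity $2\binom{y}{d}=\binom{y}{d}+\binom{y}{d}$ and the flexibility from extra summands.

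Next we handle general $m\le n^d$ by greedy subtraction, staying below $n-1$. If $m$ is small relative to $n$, meaning $m\le c\,n^d$ for a small constant $c=c(d)$, we apply the previous step directly. All the $y_i$ are then at most $(d!m)^{1/d}< n-1$, which takes care of the constraint $x_i<n-1$. If $m$ is large, we subtract large binomials $\binom{x}{d}$ with $x<n-1$ and $x$ decreasing. Take $x_1$ maximal with $\binom{x_1}{d}\le m-M$ for a suitable buffer. The remainder is then at most $\binom{x_1+1}{d}-\binom{x_1}{d}=\binom{x_1}{d-1}\ll n^{d-1}$, which is tiny compared to $n^d$. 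Since $m\le n^d$ and $\binom{n-2}{d}\approx n^d/d!$, we need up to about $d!$ such large summands, each at most $\binom{n-2}{d}$. These steps can be chosen with distinct decreasing $x$'s, provided we spread the mass, for example by first choosing the $x_j$ near $n-2-j$. The number of greedy steps depends only on $d$.

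After $O_d(1)$ steps the remainder $m'$ satisfies $m'\le c\,n'^d$, where $n'$ is smaller than the smallest $x$ used so far. Here one ensures the remainder is not too small: stop the greedy process one step early, leaving a remainder between $M_0$ and something $\ll x_{\min}^d$. Applying the Waring step to $m'$ then gives $s_0$ distinct values all below $x_{\min}$, so they are distinct from the greedy ones. To get exactly $s$ summands, pad as needed. Use a fixed number of greedy steps, possibly with small fixed terms; alternatively vary $s_0$, since the circle method works for every $s\ge s_0$. Take $s=s_0+K(d)$, and handle the tiny range $m_0\le m$ with $n$ small relative to $m$ via the threshold $m_0$. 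Finally, cases where $n$ is so small that $n^d<m_0$ are vacuous.
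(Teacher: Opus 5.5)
Your core step is the same as the paper's. Both use a circle-method asymptotic for representations of $M$ by $s_0$ values $\binom{y}{d}$, and both discard solutions with two equal coordinates, since these satisfy an equation of the same shape in one fewer variable.

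The genuine difference is how you enforce $x_i<n-1$. You work in the natural box $y_i\le (d!M)^{1/d}$, which only suffices for $m\le c\,n^d$. For $c\,n^d<m\le n^d$ you peel off $O_d(1)$ large binomials greedily and apply the Waring step to the remainder, below the smallest peeled value. Your write-up of this stage is loose, but it can be made rigorous:
\begin{itemize}
\item Take $x_j$ maximal subject to $x_j<x_{j-1}$ (with $x_0=n-1$) and $\binom{x_j}{d}\le r_{j-1}-M_0$, and stop as soon as $r_{j-1}\le c\,x_{j-1}^d$.
\item Each step where the cap $x_j=x_{j-1}-1$ is active removes about $n^d/d!$, so there are at most about $d!$ such steps.
\item The first uncapped step has $x_j\gg_d n$, because $r_{j-1}>c\,x_{j-1}^d\gg n^d$. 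It leaves $M_0\le r_j<M_0+\binom{x_j}{d-1}$, which is at most $c\,x_j^d$ for large $n$.
\item To get exactly $s$ summands, use $s_0+K-J$ variables in the final Waring step, with a threshold uniform over these finitely many counts.
\end{itemize}

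The paper avoids all of this by shrinking the box instead of peeling. It counts solutions with every $x_i\le\frac{1}{100}m^{1/d}$. This only requires $s$ large enough (it takes $s\ge d!\cdot 1000^d$) for real solutions, and hence a positive singular integral, to persist. Then $x_i\le n/50<n-1$ is automatic for all $m\le n^d$.

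The circle-method step you invoke works equally well in the box $[\frac12T,2T]$ with $T=(d!M/s_0)^{1/d}$, since $M$ stays in the interior of the real range. This forces $y_i<n-1$ once $s_0>2^d\,d!$ and $n$ is large, so your greedy stage is correct but dispensable. It keeps the analytic input in its most familiar form and makes visible that about $d!$ summands are forced by the size constraint. The paper's route is shorter and needs no case split, greedy bookkeeping or padding.

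Two minor points. First, the uniform lower bound for the singular series of the integer-valued but non-integer-coefficient polynomial $\binom{x}{d}$ is where the paper does its main non-routine work. It finds solutions modulo $p^{1+t}$ with $t=v_p(d!)$ and lifts them by Hensel's lemma at $x=0$. Your gcd remark is the right reason no obstruction arises, but it is a citation rather than a proof. Second, the Kamke fallback, with collisions repaired via $2\binom{y}{d}=\binom{y}{d}+\binom{y}{d}$, is not an argument as it stands, though your main line does not need it.
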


Here, the exact shape of the upper bound on $m$ is not important, but note that we can certainly never represent numbers $m>s \cdot \binom{n}{d}$, so some restriction is clearly necessary.

We conclude this section by deducing the main result from these two propositions.

\begin{proof}[Proof of Theorem~\ref{thm:main}]
    We apply Proposition~\ref{prop:ant} with $d=k-1$ to find the existence of $s$ and $m_0$ such that all numbers $m$ with $m_0 \le m \le n^d$ have a representation of the form
    \[m=\binom{a_1}{k-1}+\dots+\binom{a_s}{k-1}\]
    with distinct $0<a_1<\dots<a_s<n-1$.
    In view of Proposition~\ref{prop:graph}, this implies that all numbers in the interval
    \[\left[(n-1)\binom{n-1}{k-1}-n^{k-1}, (n-1)\binom{n-1}{k-1}-m_0\right]\]
    appear as the Steiner--Wiener index of a graph. We conclude by noting that these intervals overlap for all sufficiently large $n$, by a simple computation.
\end{proof}

\subsection*{Acknowledgements}

We would like to thank Hua Wang and Xueliang Li for useful feedback on an earlier version.

\section{The graph-theoretic construction}

In this section, we prove Proposition~\ref{prop:graph}. Figure \ref{fig:construction} gives an illustration of our construction.

\begin{proof}[Proof of Proposition~\ref{prop:graph}]
    For given positive integers $a_1<\dots<a_r<n-1$, we construct the graph $G$ as follows: As the vertex set we choose $\{0,1,\dots,n-1\}$. For $0 \le i<j \le n-1$, we include an edge $(i,j)$ in $G$ iff $j \in \{a_1,\dots,a_r,n-1\}$. We now show that $G$ has the desired property.

    Indeed, let us note that since the vertex with label $n-1$ is connected to all other vertices, $G$ is a star graph. In particular, for any $S \subset V(G)$ of size $k$, we have $d(S) \in \{k-1,k\}$. To compute $\SW(S_n)-\SW(G)$, we therefore need to compute the number of such $S$ not containing the vertex $n-1$ which have $d(S)=k-1$ (those that contain the vertex $n-1$ are already taken into account in $\SW_k(S_n)$). But this clearly happens iff $\max(S) \in \{a_1,\dots,a_r\}$. However, there are exactly $\binom{a_i}{k-1}$ sets $S$ with $\max(S)=a_i$.
\end{proof}

\begin{remark*}
    Note that even though all our constructed graphs are star graphs of diameter $2$, one can adapt the construction to solve the inverse problem for graphs of any given diameter using a broom graph instead of a star as the initial template.
\end{remark*}

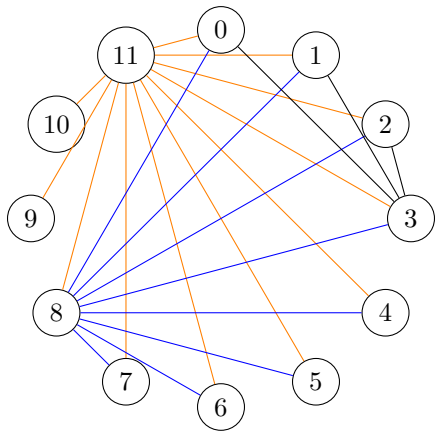
\begin{figure}[ht]
\begin{center}
\begin{tikzpicture}[scale=2.5]

\def\n{12}

\foreach \i in {0,...,11} {
    \node[circle, draw, fill=white] (N\i) 
        at ({90 - 360/\n * \i}:1) {\i};
}

\foreach \a in {0,...,10} {
     \draw[orange] (N\a) -- (N11);
 }

\foreach \a in {0,...,7} {
     \draw[blue] (N\a) -- (N8);
 }

\foreach \a in {0,...,2} {
     \draw[black] (N\a) -- (N3);
}

\end{tikzpicture}
\caption{The nested star construction for $n=12, r=2$ with $a_1=3, a_2=8$}\label{fig:construction}
\end{center}
\end{figure}

\section{Counting solutions to diophantine equations}

In this section, we prove Proposition~\ref{prop:ant}. The key technical input is the following asymptotic counting result for the number of representations of a number $m$ as a sum of binomial coefficients. To this end, for positive integers $m,n,d,s,\lambda_1,\dots,\lambda_s$, let $B=\lceil\frac{1}{100} m^{1/d}\rceil$ and consider the counting function
  \[N(m):=\#\{(x_1,\dots,x_s) \in \mathbb{N}^s: x_1,\dots,x_s \le B: \sum_{i=1}^s \lambda_i \binom{x_i}{d}=m\}.\]
\begin{prop}\label{prop:counting}
    For fixed $s$ and $d$ with $s \ge d! \cdot 1000^d$ we have
    \[N(m)=(1+o(1)) c_m \cdot m^{s/d-1}\]
    as $m \to \infty$, provided that more than $d \cdot d!$ of the variables $\lambda_i$ are equal to $1$.
    
    Here, $c_m=c_{m,\lambda_1,\dots,\lambda_s}$ is a constant depending on $m,\lambda_1,\dots,\lambda_s$, which satisfies $c_m \in [c,c']$ for some positive constants $c,c'$ depending only on $s$ and $d$. 
\end{prop}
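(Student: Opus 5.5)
We prove Proposition~\ref{prop:counting} with the Hardy--Littlewood circle method, treating $\binom{x}{d}$ as a degree-$d$ polynomial with leading coefficient $1/d!$. Set $P=B$ and define the exponential sums
\[f_\lambda(\alpha)=\sum_{x\le B} e\!\left(\alpha\lambda\binom{x}{d}\right),\qquad N(m)=\int_0^1 \prod_{i=1}^s f_{\lambda_i}(\alpha)\,e(-\alpha m)\ddd\alpha .\]
Note that each $\lambda_i\ge 1$ and the condition $\sum\lambda_i\binom{x_i}{d}=m$ automatically forces $\lambda_i\binom{x_i}{d}\le m$. So only the $\lambda_i$ with $\lambda_i \ll 1$ matter essentially. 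Variables with $\lambda_i$ large contribute sums over a shorter effective range, and can be bounded crudely.

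The key idea is to use the more than $d\cdot d!$ variables with $\lambda_i=1$, together with the many remaining variables, in the standard way. We split $[0,1)$ into major arcs $\mathfrak M$, consisting of $\alpha$ within $P^{\nu-d}$ of $a/q$ with $q\le P^\nu$, and minor arcs $\mathfrak m$.

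\textbf{Minor arcs.} For the $\lambda_i=1$ variables, Weyl's inequality (or Vinogradov/Hua-type mean values) gives $\sup_{\mathfrak m}|f_1(\alpha)|\ll P^{1-\sigma}$ for some $\sigma=\sigma(d)>0$. For the other variables we use Hua's lemma,
\[\int_0^1|f_\lambda|^{2^d}\ll_\varepsilon P^{2^d-d+\varepsilon},\]
uniformly in $\lambda$ after the substitution $\alpha\mapsto\lambda\alpha$ (periodicity). Here we need $d!$ in the denominator of the coefficients handled, for example by splitting $x$ into residue classes mod $d!$. Applying Hölder over the $s\ge d!\,1000^d$ variables, which is far more than the $2^d$ needed, and noting that the effective lengths for large $\lambda_i$ are $\ll(m/\lambda_i)^{1/d}$, bounds the minor arc contribution by $o(m^{s/d-1})$. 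The $1000^d$ slack and the choice $B=\lceil m^{1/d}/100\rceil$ give room here.

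\textbf{Major arcs.} On $\mathfrak M$ we approximate
\[f_\lambda(a/q+\beta)\approx q^{-1}S_\lambda(q,a)\,v_\lambda(\beta),\]
where $S_\lambda(q,a)=\sum_{x\bmod q d!}e\bigl(a\lambda\binom{x}{d}/q\bigr)$ with appropriate normalization and $v_\lambda$ is the corresponding integral. This produces $\mathfrak S(m)\,J(m)$. The singular integral satisfies $J(m)\asymp m^{s/d-1}$, and it is positive because $m$ lies well inside the range: the cap $B$ is much larger than needed for each variable, so the real solution manifold is nondegenerate. The singular series $\mathfrak S(m)=\prod_p \chi_p(m)$ converges absolutely using $S(q,a)\ll q^{1-1/d+\varepsilon}$ for the $\lambda=1$ sums, together with $s$ large.

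\textbf{Main obstacle: lower bounds for $\chi_p$.} The hardest step is showing $\chi_p(m)\ge c>0$ uniformly in $m$ and the $\lambda_i$. Since binomial coefficients are integer-valued, the congruence $\sum\binom{x_i}{d}\equiv m\pmod{p^t}$ has no local obstruction: $\binom{x}{d}$ takes the value $1$ at $x=d$ and $0$ at $x<d$. Using the $>d\cdot d!$ variables with $\lambda_i=1$, we can hit every residue by a Hensel-type argument. A nonsingular solution needs $\frac{d}{dx}\binom{x}{d}$ to be a $p$-unit, which can fail for $p\le d$. The count $d\cdot d!$ is exactly what handles these small primes via a Cauchy--Davenport / Chowla style argument over residues mod $p^{\gamma}$. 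We would first prove solvability modulo $p^{\gamma}$ with a nonsingular point, then lift. For large $p$, the bound $\chi_p=1+O(p^{-1-\delta})$ gives a uniform lower bound on the tail of the product. Combining the pieces gives $c_m=\mathfrak S(m)\cdot J(m)m^{1-s/d}\in[c,c']$.
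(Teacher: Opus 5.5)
Your proposal follows essentially the same route as the paper. Both use the circle method with complete sums taken modulo $q\cdot d!$, and a singular integral that is positive because the $s\ge d!\cdot 1000^d$ variables put $m\approx 100^d B^d$ well inside the real range (since each $\binom{x_i}{d}$ is at most about $B^d/d!$, this, rather than the minor arcs, is where that hypothesis is really needed). Both also use an Euler product whose large-prime factors are close to $1$, and local densities at small primes obtained from the values $\{0,1\}$ of $\binom{x}{d}$ (or Cauchy--Davenport for $p>d$) followed by Hensel lifting.

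The differences are minor. On the minor arcs you combine Weyl's bound for the $\lambda_i=1$ factors with Hua's lemma and H\"older, instead of applying Weyl's sup bound to every $S(\lambda_i\alpha)$; this is slightly more robust, since it never needs $\lambda_i\alpha$ itself to lie on a minor arc. For $p\le d$, the point you lift from cannot in general have unit derivative (e.g.\ $d=p=2$, where the derivative $x-\tfrac12$ always has $2$-adic valuation $-1$). It should instead be taken as in the paper: $x_1=0$, with $v_p(f'(0))=v_p((d-1)!)\le v_p(d!)$ for $f(x)=x(x-1)\cdots(x-d+1)$, which suffices for Hensel's lemma on residues divisible by $p^{1+2v_p(d!)}$.
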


Note that the bounds on $s$ and the $\lambda_i$ arise from a rather crude treatment of the local densities, but certainly some condition has to be imposed as a consequence of our various size restrictions. Since we are only aiming for an asymptotic result in our main theorem, we have made no attempt to optimize the constants.

\begin{proof}[Proof of Proposition~\ref{prop:ant}]
    Let $N^*(m)$ be defined similarly to $N(m)$ but with the $x_i$ required to be distinct. Note that the difference $N(m)-N^*(m)$ counts solutions where at least two of the variables are the same. But this amounts to finitely many equations of the same shape as before, with two $\lambda_i$ replaced by their sum, and with $s$ replaced by $s-1$. Applying this with $\lambda_1=\dots=\lambda_s=1$, by Proposition~\ref{prop:counting} we therefore have
    \[N(m)-N^*(m) \ll m^{(s-1)/d-1}=o(m^{s/d-1})\]
    and hence
    \[N^*(m)=(1+o(1)) c_m \cdot m^{s/d-1}\]
    as $m \to \infty$. In particular, if $m$ is sufficiently large, we will have $N^*(m)>0$, hence any $m \le n^d$ has a representation of the desired shape with $x_i \le \frac{1}{50}m^{1/d} \le n/50 < n-1$.
\end{proof}

Finally, we are left to establish our counting result, Proposition~\ref{prop:counting}. This is done by an application of the Hardy-Littlewood circle method, familiar from the resolution of Waring's problem. Most of the steps are relatively standard, so we will be brief and refer the reader to \cite{vaughan} for more details on the method. However, the fact that $\binom{x}{d}$ is an \emph{integer-valued} but not an \emph{integer coefficient} polynomial accounts for some complications in the treatment of the local densities.

\begin{proof}[Proof of Proposition~\ref{prop:counting}]
    For $\alpha \in \mathbb{R}$, let us consider the exponential sum
    \[S(\alpha)=\sum_{x=1}^B e\left(\alpha \binom{x}{d}\right)\]
    where $e(\alpha)=e^{2\pi i\alpha}$. We then have
    \[N(m)=\int_0^1 S(\lambda_1\alpha) \cdot \dotsc \cdot S(\lambda_s \alpha)e(-m\alpha) \ddd\alpha\]
    by the Fourier orthogonality relations. The next step is to split the interval of integration in the \emph{major arcs} $\mathfrak{M}$, consisting of small intervals around rational numbers with small denominators, and its complement $\mathfrak{m}$, the \emph{minor arcs}; the idea being that the contribution coming from $\mathfrak{M}$ should give the main term, while on $\mathfrak{m}$ the exponential sum $S(\alpha)$ is small due to oscillation, leading to a negligible contribution.

    More precisely, we let the major arcs $\mathfrak{M}$ to consist of numbers of the form $\alpha=\frac{a}{q}+\beta$ for some coprime integers $a$ and $q$ with $q \le B^{1/100}$ and a real number $\beta$ with $|\beta| \le B^{1/100-d}$.

    Weyl's inequality as in \cite[Lemma~2.4, Theorem~2.1]{vaughan} then shows that
    \[\sup_{\alpha \in \mathfrak{m}} |S(\lambda_i\alpha)| \ll B^{1-\frac{1}{100 \cdot 2^{d-1}}+\varepsilon}\]
    for any $\varepsilon>0$. Thus, for $s>100d \cdot 2^{d-1}$, we have
    \[N(m)=\int_{\mathfrak{M}} S(\lambda_1\alpha) \cdot \dotsc \cdot S(\lambda_s \alpha) e(-m\alpha) \ddd\alpha+o(m^{s/d-1}).\]
    As in \cite[Section~2.4]{vaughan}, we can proceed to evaluate the major arc contribution to obtain
    \[N(m)=\mathfrak{S}(m,B^{1/100}) \cdot \mathfrak{I}(m,B^{1/100}) \cdot B^{s-d}+o(B^{s-d})\]
    with the truncated \emph{singular series}
    \[\mathfrak{S}(m,Q):=\sum_{q \le Q} \sum_{(a;q)=1} \widetilde{S}(q,\lambda_1 a) \cdot \dotsc \cdot  \widetilde{S}(q,\lambda_s a) e\left(-\frac{am}{q}\right)\]
    defined using the exponential sum
    \[\widetilde{S}(q,a)=\frac{1}{q \cdot d!}\sum_{b \Mod{q \cdot d!}} e\left(\frac{a \cdot \binom{b}{d}}{q}\right)\]
    and the truncated \emph{singular integral}
    \[\mathfrak{I}(m,Q)=\int_{-Q}^Q v(\lambda_1\beta) \cdot \dotsc \cdot v(\lambda_s\beta) e(-m\beta/B^d) \ddd\beta\]
    defined using the exponential integral
    \[v(\beta)=\int_0^1 e\left(\beta \cdot t^d/d!\right) \ddd t.\]
    Note that the only essential change compared to the discussion in \cite[Section~2.4]{vaughan} is that the value of $\binom{x}{d}$ modulo $q$ depends on the value of $x$ modulo $q \cdot d!$, leading to the somewhat unusual definition of $\widetilde{S}(q,a)$. For the singular integral, we have already rescaled the variables leading to the factor $B^{s-d}$ and noted that the lower order terms in $\binom{t}{d}$ give a negligible contribution.

    In particular, up to the factor of $d!$, the singular integral is now the same as in Waring's problem and therefore the discussion in \cite[Section~2.5]{vaughan} shows that $\mathfrak{I}(m,Q)$ is uniformly bounded from above and below by positive constants, whenever $s>d! \cdot 1000^d$. Note that the rather large number of variables is required to ensure the existence of real solutions to our equations.

    It remains to show that $\mathfrak{S}(m,B^{1/100})$ is uniformly bounded from above and below by positive constants.
    The first step is to note that for $(a_1;q_1)=(a_2;q_2)=(q_1;q_2)=1$, as in \cite[Lemma~2.10]{vaughan} we still have
    \[\widetilde{S}(q_1q_2,a_1q_2+a_2q_1)=\widetilde{S}(q_1,a_1) \cdot \widetilde{S}(q_2,a_2).\]
    Indeed, the key observation here is that even though above we noted that the value of $\binom{x}{d}$ modulo $q$ a priori depends on the residue of $x$ modulo $q \cdot d!$, it really only depends on the residue of $x$ modulo $q \cdot (d!;q)$, allowing for the above decomposition.

    Completing the singular series as in \cite[Section~2.4]{vaughan} we therefore find that
    \[\mathfrak{S}(m,B^{1/100})=\mathfrak{S}(m)+o(1)\]
    with the factors of the Euler product $\mathfrak{S}(m)=\prod_p \chi_p$ defined via
    \[\chi_p=\sum_{k=0}^{\infty} \sum_{(a;p^k)=1} \widetilde{S}(p^k,\lambda_1 a) \cdot \dotsc \cdot  \widetilde{S}(p^k,\lambda_s a) e\left(-\frac{am}{p^k}\right).\]
    As in \cite[Theorem~2.4]{vaughan}, by Weyl's inequality the Euler product is absolutely convergent for $s>2^d$ and the contribution of $p>p_0(d)$ is between $1/2$ and $3/2$, so that it remains to consider the finitely many small primes $p \le p_0(d)$.

    For those, writing $v_p(d!)=t$, we still have
    \[\chi_p=\lim_{k \to \infty} \frac{p^k}{(p^{k+t})^s} \cdot M_m(p^k)\]
    as in \cite[Lemma~2.12]{vaughan} where $M_m(p^k)$ is the number of solutions of 
    \[\lambda_1\binom{x_1}{d}+\dots+\lambda_s\binom{x_s}{d} \equiv m \Mod{p^k}\]
    with $x_1,\dots,x_s$ running through residue classes modulo $p^{k+t}$.

    We claim that $M_m(p^{1+t})>0$, i.e. we can find a solution modulo $p^{1+t}$. Indeed, by setting some variables to zero, it suffices to consider the case $s=d \cdot d!$ where all $\lambda_i$ are equal to $1$. Writing $B_d$ for the set of residues of binomial coefficients $\binom{x}{d}$ modulo $p^{1+t}$ as $x$ runs through all integers, we want to prove that $s \cdot B_d=\mathbb{Z}/p^{1+t}\mathbb{Z}$.

    If $p>d$, note that $t=0$ so that $d!$ is coprime to $p$ and the size of $B_d$ is the same as the number of values the integer polynomial $x(x-1)\dots (x-d+1)$ takes modulo $p$. But since every value occurs at most $d$ times, we find that $|B_d| \ge \frac{p}{d}$, which by the Cauchy-Davenport theorem is more than enough to deduce that $s \cdot B_d=\mathbb{Z}/p\mathbb{Z}$.

    On the other hand, if $p<d$, we always have $\{0,1\} \subset B_d$ so that certainly $s \cdot B_d=\mathbb{Z}/p^{1+t}\mathbb{Z}$ as long as $s \ge p^{1+t}$ which is satisfied since $p^{1+t} \le d \cdot d!$.
    Note that the previous argument shows that when $s>d \cdot d!$ we can even ensure the existence of a solution with $x_1=0$.

    We can now use this solution to lift our solution modulo $p^{1+t}$ to a solution modulo $p^k$ for $k>t$: For any of the $p^{(k-t-1)(s-1)}$ values of $x_2,\dots,x_n$ modulo $p^{k+t}$ congruent to our initial solution modulo $p^{1+2t}$, we claim that we can find $x_1$ solving our congruence modulo $p^k$.

    Indeed, it suffices to show that the values of $\binom{x_1}{d}$ cover all residues $\equiv 0 \Mod{p^{1+t}}$ or equivalently, the values of $f(x_1)=x_1(x_1-1)\dots (x_1-d+1)$ cover all residues $\equiv 0 \Mod{p^{1+2t}}$. But since $v_p(f'(0)) \le t$, this is the consequence of an application of Hensel's lemma. We have thus established
    \[M_m(p^k) \ge p^{(k-t-1)(s-1)} M_m(p^{1+t}) \ge p^{(k-t-1)(s-1)}\]
    and hence
    \[\chi_p \ge p^{-ts-(t+1)(s-1)} \gg_{d,s} 1,\]
    finishing our proof.
\end{proof}

\bibliographystyle{plain}

\bibliography{bibliography}

\end{document}